\newtheorem{thm}{Theorem}[section]
\newtheorem{theorem}[thm]{Theorem}
\newtheorem{lemma}[thm]{Lemma}
\newtheorem{prop}[thm]{Proposition}
\theoremstyle{definition}
\theoremstyle{definition}
\newtheorem{rem}[thm]{Remark}
\newtheorem{remark[thm]}{Remark}
\newtheorem{cor}[thm]{Corollary}
\newtheorem{con}[thm]{Conjecture}
\theoremstyle{remark}
\DeclareMathOperator{\cat}{{\mbox{\rm cat$_{\rm LS}$}}}
\def\Im{\protect\operatorname{Im}}
\def\m{\medskip}
\long\def\forget#1\forgotten{} %
\numberwithin{equation}{section}
\begin{document}

\title[Rudyak's conjecture for lower dimensional  manifolds]{Rudyak's conjecture for lower dimensional 1-connected manifolds
}
\author[A.~Dranishnikov]{Alexander Dranishnikov}

%    Only \author and \address are required; other information is
%    optional.  Remove any unused author tags.
\author[D. Kundu]{Deep Kundu}

\address{Department of Mathematics, University
of Florida, 358 Little Hall, Gainesville, FL 32611-8105, USA}
\curraddr{}
\email{dranish@math.ufl.edu}
\email{deepkundu@ufl.edu}
%\thanks{Supported by NSF, grant DMS-0904278}

%    \subjclass is required.
\subjclass[2000]{55M30}

\date{}

\dedicatory{In memory of Yuli Rudyak}

%    "Communicated by" -- provide editor's name; required.
%\commby{Daniel Ruberman}

%    Abstract is required.
\begin{abstract}
Rudyak's conjecture states that for any degree one map $f:M\to N$ between oriented closed manifolds there is the inequality  $\cat (M)\ge \cat(N)$
for the Lusternik-Shnirelmann category.
We prove the Rudyak's conjecture for $ n$-dimensional simply connected spin manifolds for $n\le 8$.
\end{abstract}

\maketitle

\section{Introduction} The reduced {\em Lusternik-Schnirelmann category}
(briefly LS-category) $\cat X$ of a topological space $X$ is the
minimal number $n$ such that there is an open cover $\{U_0,\dots,
U_n\}$ of $X$ by $n+1$ contractible in $X$ sets. We note that the LS-category 
is a homotopy invariant. 
Most of deep applications of the LS-category are based on 
the classical Lusternik and Schnirelmann theorem
\cite{CLOT} which states that $\cat M+1$ gives a low bound
for the number of critical points on a  manifold $M$ of any smooth not
necessarily Morse function. The concept of the LS-category and the above theorem were brought by Lusternik and Schnirelmann
to solve the Poincare problem on the existence of three closed geodesics on a 2-sphere for any Riemannian metric on it~\cite{LS1},\cite{LS2}. 
In modern time Yu. Rudyak applied the LS-category in his approach to the Arnold conjecture on the number of fixed points of symplectomorphisms~\cite{Ru2}.

The LS-category is a numerical homotopy invariant which is difficult to compute. Clearly $\cat(S^n)=1$. The converse is also true in view of
the classical results by Smale, Freedman, and Perelman:  If $\cat(M)=1$ then $M$ is homeomorphic to a sphere.
It was proven in~\cite{DKR} that if  $\cat(M)=2$, then the fundamental group of $M$ is free.

Moreover many natural conjectures on the LS-category turned out to be false.
Perhaps the most famous was the Ganea Conjecture of the 60s (see~\cite{Ga}) which states that $\cat(M\times S^n)=\cat M+1$ for a closed manifold $M$.
On the break of the 20th century
N. Iwase constructed two simply-connected 16-dimensional counter-examples $M_2$ and $M_3$ to the Ganea Conjecture. His examples are quite delicate and they are based on some phenomena for primary 2-components and primary 3-components in the unstable homotopy groups of spheres.

In the late 90s Rudyak proposed the following~\cite{Ru1}:
\begin{con}
For a degree one map $f:M\to N$ between closed orientable manifolds, $\cat(M)\ge\cat(N)$.
\end{con}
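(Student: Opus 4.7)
The plan is to attack the conjecture through the \emph{category weight} of the fundamental cohomology class. Recall that for a nonzero class $u \in H^*(X;R)$, the Fadell--Husseini--Rudyak category weight $\cwgt(u)$ satisfies: (i) $\cat(X) \ge \cwgt(u)$; and (ii) $\cwgt(\varphi^*u) \ge \cwgt(u)$ for any map $\varphi$. For a closed oriented $n$-manifold $N$, let $[N]^\vee \in H^n(N;\Z)$ denote the Kronecker dual of the fundamental class $[N]$. A degree one map $f\colon M\to N$ then satisfies $f^*[N]^\vee = [M]^\vee$.

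I would first reduce the conjecture to the assertion $(\ast)$: $\cwgt([N]^\vee)=\cat(N)$ for every closed oriented manifold $N$. Granting $(\ast)$, properties (i) and (ii) immediately yield
\[
\cat(M) \ge \cwgt([M]^\vee) = \cwgt(f^*[N]^\vee) \ge \cwgt([N]^\vee) = \cat(N),
\]
which is the conjecture.

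The whole content thus resides in $(\ast)$. The approach I propose is Strom's reformulation: $\cwgt(u)\ge k$ is equivalent to $g_{k-1}^*u=0$ in $H^*(G_{k-1}(N))$, where $g_{k-1}\colon G_{k-1}(N)\to N$ is the $(k-1)$-st Ganea fibration. Set $k=\cat(N)$; the fibration $g_{k-1}$ admits no section by the Ganea characterization of LS-category, and one must upgrade this nonexistence of a section to the vanishing of $g_{k-1}^*[N]^\vee$. My route would be obstruction-theoretic: assuming $g_{k-1}^*[N]^\vee \ne 0$, I would combine Poincar\'e duality on $N$ with the compatibility of the Ganea construction under the cap product $\cdot\frown [N]$ to build a partial section on a skeleton of $N$ and extend it globally via Eilenberg-style obstruction calculations, producing a contradiction with $\cat(N)=k$.

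The principal obstacle is $(\ast)$ itself: this is the Rudyak--Strom conjecture on the category weight of the fundamental class, which remains open in general. For non-simply connected $N$ the fiber of $g_{k-1}$ over the $1$-type of $N$ is notoriously complicated, and no purely cohomological invariant (cup-length, Steenrod operations, Massey products, or rational models) suffices to detect $\cat(N)$ on the nose; the Iwase counterexamples to the Ganea conjecture cited in the introduction illustrate precisely this gap. For a full resolution I would therefore expect to need a mixed invariant combining category weight with group-cohomological data in the spirit of the fundamental-group arguments of \cite{DKR}. Synthesizing these ingredients into a single obstruction theory, rather than any individual step, is where the central difficulty lies.
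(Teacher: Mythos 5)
Your reduction is correct as stated: the properties of category weight you list are standard, the identity $f^*[N]^\vee=[M]^\vee$ for degree-one maps is right, and granting $(\ast)$ the displayed chain of inequalities does establish $\cat(M)\ge\cat(N)$. You are also right — and commendably honest — that the entire content resides in $(\ast)$, which is open. But this means the proposal is not a proof; it is a reduction of one open conjecture to another. Your sketch of how one might attack $(\ast)$ via Strom's Ganea-fibration reformulation together with Poincar\'e duality is a reasonable research direction, but as written there is no argument, and in particular the phrase ``compatibility of the Ganea construction under the cap product'' papers over exactly the difficulty: nothing in the Ganea tower interacts naturally with cap products, and that missing interaction is the heart of why $(\ast)$ has resisted proof.

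You should also note that the paper itself does not prove the statement you were given: it is stated there as a conjecture, and the paper establishes only the special case of simply connected manifolds of dimension $\le 8$. That partial result uses a genuinely different route from yours. Rather than category weight, the paper (i) disposes of $\cat M\ge 3$ by a primary-obstruction argument against the last Ganea fibration together with Whitehead's $\dim/(k+1)$ bound, (ii) handles the non-spin case by surgering the normal Gauss map and producing a nonzero triple cup product $w_2^2\cdot b$, (iii) surgers a degree-one map between spin manifolds to a $3$-equivalence, and (iv) in the key Lemma~\ref{main lemma} runs an explicit obstruction-theory argument up the Moore--Postnikov tower of $p_2^N:G_2(N)\to N$, using the split injectivity of $f^*$ from Proposition~\ref{split} to kill each $k$-invariant in turn. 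Your approach, if $(\ast)$ were known, would prove the conjecture in full generality; the paper's approach sidesteps $(\ast)$ entirely and extracts a concrete partial result from surgery plus low-dimensional obstruction theory, which is why it can succeed where the category-weight route is currently stuck.
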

In~\cite{Ru1} Rudyak proved his conjecture with some restrictions on dimension, connectivity and the LS-category of the domain.
It turned out that his conjecture is quite difficult even in the case of the most simple maps of degree one $f:M\# N\to N$, the collapsing of a summand in the connected sum~\cite{DS}.
Perhaps the most interesting and still a challenging case of Rudyak's conjecture is the case of birational morphism $f:M'\to M$ between nonsingular projective varieties.
We note when $M'$ is a blowup of $M$ at a point the Rudyak's conjecture for $f$ follows from the above formula.

Rudyak's conjecture like many other natural conjectures on the LS-category could be false.
In~\cite{Dr1} there was an attempt to construct a counterexample to the Rudyak conjecture by means of Iwase's manifolds.
Using Iwase's examples the first author
constructed a map of degree one $f:M\to N$ between 32-dimensional  simply-connected manifolds which would be a counter-example to Rudyak's conjecture if $\cat(M_2\times M_3)\ge 5$.
We recall that $\cat(M_2)=\cat(M_3)=3$. The classical inequality~\cite{CLOT}  $$\cat(X\times Y)\le\cat (X)+\cat(Y)$$
and computations in~\cite{Dr1}  imply that $4\le \cat(M_2\times M_3)\le 6$.

A different idea for a counterexample was suggested in~\cite{Dr}.

In any case we believe in Rudyak's conjecture for lower dimensional manifolds. In~\cite{Ru3} Rudyak verified his conjecture for manifolds of dimension $\le 4$.
In this paper we prove Rudyak's conjecture for simply connected spin manifolds of dimension $\le 8$ (Theorem~\ref{main}). At the end of the paper we give an example of non-spin simply connected 7-dimensional manifold. The existence of such an example does not allow us to extend our approach to non-spin manifolds.

\section{Preliminaries}
\subsection{Properties of maps of degree one}
The following is well-known.
\begin{prop}\label{split}
 Let $f \colon M\to N$ be a map between closed connected orientable $n$-manifolds of degree one. Then the map $f_* \colon H_*(M;R) \to H_* (N;R) $ is a
split epimorphism and 
$f^* \colon H^* (N;R)  \to H^*(M;R)$ is a split monomorphism for any coefficient ring. 
 \end{prop}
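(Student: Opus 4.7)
The plan is to construct explicit splittings using Poincaré duality and the projection formula, exploiting the hypothesis $f_*[M] = [N]$. Because both manifolds are closed connected orientable $n$-manifolds, each has a fundamental class over any coefficient ring $R$, obtained by coefficient change from the integral fundamental class, and the Poincaré duality isomorphisms
\[
D_M \colon H^k(M;R) \xrightarrow{\cong} H_{n-k}(M;R), \quad \alpha \mapsto \alpha \cap [M],
\]
\[
D_N \colon H^k(N;R) \xrightarrow{\cong} H_{n-k}(N;R), \quad \beta \mapsto \beta \cap [N]
\]
are available for all $R$.

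Next, I would define an Umkehr (or transfer) map in cohomology by
\[
f_! = D_N^{-1} \circ f_* \circ D_M \colon H^k(M;R) \to H^k(N;R),
\]
and verify that $f_! \circ f^* = \mathrm{id}_{H^*(N;R)}$. The key computation uses the naturality of the cap product (the projection formula)
\[
f_*(f^*(\beta) \cap \mu) = \beta \cap f_*(\mu),
\]
together with $f_*[M]=[N]$, which is precisely the degree one assumption. Indeed, for $\beta \in H^k(N;R)$,
\[
f_!(f^*\beta) = D_N^{-1}\bigl(f_*(f^*\beta \cap [M])\bigr) = D_N^{-1}(\beta \cap [N]) = \beta.
\]
Hence $f^*$ has a left inverse $f_!$, so it is a split monomorphism.

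For the homology statement I would dualize the construction and define
\[
s = D_M \circ f^* \circ D_N^{-1} \colon H_k(N;R) \to H_k(M;R),
\]
and apply the same projection formula to obtain $f_* \circ s = \mathrm{id}_{H_*(N;R)}$: writing $\gamma = D_N(\beta) = \beta \cap [N]$ with $\beta = D_N^{-1}\gamma$,
\[
f_*(s(\gamma)) = f_*(f^*\beta \cap [M]) = \beta \cap f_*[M] = \beta \cap [N] = \gamma.
\]
So $s$ is a section of $f_*$, making $f_*$ a split epimorphism.

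There is no serious obstacle here; this is a bookkeeping exercise once one has fixed (i) a fundamental class in $H_n(\cdot;R)$ for each manifold (obtained via the change-of-coefficients homomorphism applied to the integral orientation class), and (ii) the projection formula for the cap product, which holds over an arbitrary coefficient ring. The only point deserving a sentence of care is to note that $f_*[M]_R = [N]_R$ follows from $f_*[M]_{\Z}=[N]_{\Z}$ by naturality of the coefficient change, so the degree one hypothesis, stated integrally, transfers to $R$ automatically.
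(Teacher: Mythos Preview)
Your proof is correct and follows exactly the same approach as the paper: the paper defines the splittings as the compositions $PD \circ f^* \circ PD^{-1}$ and $PD^{-1} \circ f_* \circ PD$, which are precisely your maps $s$ and $f_!$. You actually give more detail than the paper, since you carry out the projection-formula verification that these are genuine splittings, whereas the paper simply writes down the compositions.
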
 
\begin{proof}
The spitting map for $f_*:H_*(M;R)\to H_*(N;R)$ is the composition $$H_*(N;R)\stackrel{PD}\to H^{n-*}(N;R)\stackrel{f^*}\to H^{n-*}(M;R) \stackrel{PD}\to H_*(M;R)$$ and for
$f^*:H^*(N;R)\to H^*(M;R)$ is  the composition $$H^*(M;R)\stackrel{PD}\to H_{n-*}(M;R)\stackrel{f_*} \to H_{n-*}(N;R)\stackrel{PD}\to H^*(N;R).$$
\end{proof}

\subsection{G. W. Whitehead Theorem} For nice spaces, such as CW complexes,
it is an easy observation that $\cat X\le\dim X$. 
D.P. Grossmann~\cite{Gro} and independently  G.W.
Whitehead~\cite{Wh},~\cite{CLOT} proved that for simply connected CW
complexes $\cat X\le\dim X/2$. Moreover, the following inequality holds true~\cite{CLOT}.
\begin{thm}\label{WhTh} 
For $k$-connected CW complexes  $$\cat X\le\dim X/(k+1).$$
\end{thm}

\subsection{Ganea's approach to the LS-category}

Recall that an element of an iterated join $X_0*X_1*\cdots*X_n$ of topological spaces is a formal linear combination $t_0x_0+\cdots +t_nx_n$ of points $x_i\in X_i$ with $\sum t_i=1$, $t_i\ge 0$, in which all terms of the form $0x_i$ are dropped. Given fibrations $f_i\colon X_i\to Y$ for $i=0, ..., n$, the fiberwise join of spaces $X_0, ..., X_n$ is defined to be the space
\[
    X_0*_Y\cdots *_YX_n=\{\ t_0x_0+\cdots +t_nx_n\in X_0*\cdots *X_n\ |\ f_0(x_0)=\cdots =f_n(x_n)\ \}
\]
and the fiberwise join of fibrations $f_0, ..., f_n$ is the fibration 
\[
    f_0*_Y*\cdots *_Yf_n\colon X_0*_YX_1*_Y\cdots *_YX_n \longrightarrow Y
\]
defined by taking a point $t_0x_0+\cdots +t_nx_n$ to $f_i(x_i)$ for any $i$ with $t_i>0$. As the name `fiberwise join' suggests, the fiber of the fiberwise join of fibrations is given by the join of fibers of fibrations. 

When $X_i=X$ and $f_i=f:X\to Y$ for all $i$  the fiberwise join of spaces is denoted by $*^{n+1}_YX$ and the fiberwise join of fibrations is denoted by $*_Y^{n+1}f$. 

For a topological space $X$, we turn an inclusion of a point $*\to X$ into a fibration $p_0^X:G_0(X)\to X$. The $n$-th Ganea space of $X$ is defined to be the space $G_n(X)=*_X^{n+1}G_0(X)$, while the $n$-th Ganea fibration $p_n^X:G^n_X\to X$ is the fiberwise join of fibrations $p_0^X:G_0(X)\to X$. 

The following theorem was proven by Schwartz in a more general form~\cite{Sv}:
\begin{thm}\label{Schwartz}  The inequality $\cat(X)\le n$ holds if and only if the fibration $p^X_n:G_n(X)\to X$ admits a section. 
\end{thm}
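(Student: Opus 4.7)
The statement is the standard equivalence between LS-category and the existence of a section of the appropriate Ganea fibration. The plan is to prove both implications directly by passing between open covers by categorical sets and sections of the fiberwise join fibration, using only the contractibility of $G_0(X)$ and a partition of unity argument.

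First I recall the input. By construction, $p_0^X\colon G_0(X)\to X$ is the result of turning the inclusion $\ast\hookrightarrow X$ into a fibration, so $G_0(X)$ is contractible, its fiber is $\Omega X$, and the homotopy lifting property is available. A point of $G_n(X)=\ast^{n+1}_X G_0(X)$ lying over $x\in X$ has the form $t_0 y_0+\cdots+t_n y_n$ with $y_i\in G_0(X)$, $p_0^X(y_i)=x$, $\sum t_i=1$, $t_i\ge 0$, and terms with $t_i=0$ dropped; the map $p_n^X$ sends it to $x$.

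For the direction $\cat(X)\le n\Rightarrow p_n^X$ has a section: choose an open cover $\{U_0,\dots,U_n\}$ of $X$ by $n+1$ sets contractible in $X$. The inclusion $\inj_i\colon U_i\hookrightarrow X$ is null-homotopic, and since $G_0(X)$ is contractible the constant map $U_i\to G_0(X)$ covers the constant map $U_i\to X$; the homotopy lifting property applied to the null-homotopy between $\inj_i$ and the constant map then produces a strict section $s_i\colon U_i\to G_0(X)$ of $p_0^X|_{U_i}$. Choose a partition of unity $\{\phi_i\}$ subordinate to $\{U_i\}$, and define
\[
    s\colon X\longrightarrow G_n(X),\qquad s(x)=\sum_{i\,:\,\phi_i(x)>0}\phi_i(x)\,s_i(x).
\]
Since $p_0^X(s_i(x))=x$ for every active index $i$, this formal combination lies in the fiberwise join, and $p_n^X\circ s=\id_X$.

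For the converse, suppose $s\colon X\to G_n(X)$ is a section of $p_n^X$ and write $s(x)=\sum_{i=0}^n t_i(x)\,y_i(x)$ using the natural barycentric coordinates on the fiberwise join. Set $U_i=\{x\in X\mid t_i(x)>0\}$; these are open, and because $\sum t_i=1$ they cover $X$. On $U_i$ the ``coordinate'' $y_i\colon U_i\to G_0(X)$ is continuous and satisfies $p_0^X\circ y_i=\inj_i$, so the inclusion $\inj_i$ factors through the contractible space $G_0(X)$ and is therefore null-homotopic. Thus each $U_i$ is contractible in $X$, and $\cat(X)\le n$.

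The only genuinely delicate point is continuity: one must check that extracting the coordinate $y_i$ from $s$ really gives a continuous map on the open set $U_i$, and that the partition-of-unity construction in the first direction defines a continuous map into $G_n(X)$. Both follow from the standard description of the topology on the (fiberwise) join as a quotient of $\Delta^n\times G_0(X)^{n+1}$ subject to dropping the $i$-th coordinate when $t_i=0$; away from the locus $\{t_i=0\}$ the coordinate $y_i$ is recovered continuously from the quotient. This verification is routine but is the main technical step; everything else is formal.
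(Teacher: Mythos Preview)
Your argument is the standard direct proof of the Ganea--Schwarz characterization and is correct. The only tacit hypothesis you are using is that $X$ admits partitions of unity subordinate to open covers (i.e.\ paracompactness); since the paper works throughout with CW complexes and closed manifolds this is harmless, but it is worth stating explicitly.

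As for comparison with the paper: there is nothing to compare. The paper does not supply a proof of this theorem at all; it simply quotes the result and attributes it to Schwarz~\cite{Sv} (in a more general form, namely for the sectional category of an arbitrary fibration). So your write-up goes beyond what the paper does. If you want to match the paper's treatment you can simply cite \cite{Sv} or \cite{CLOT}; if you prefer to keep a self-contained argument, the one you gave is fine, and you have already flagged the one genuinely nontrivial point (continuity of the coordinate projections $y_i$ off the locus $\{t_i=0\}$).
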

We note that the fiber of $p_0^X$ is homotopy equivalent to the loop space $\Omega(X)$. Thus, the fiber of $p_n^X$ is homotopy equivalent to the join product $\ast^{n+1}\Omega(X)$.

We recall that the LS-category of a map $f:Y\to X$ is the least integer $k$ such that $Y$ can be covered by
$k+1$ open sets $U_0,\dots, U_k$ such that the restrictions $f|_{U_i}$ are null-homotopic for all $i$.
The above theorem can be extended to maps~\cite{Dr}:
\begin{thm}\label{ganea-maps}
\label{t:ganea}
For a map $f:Y\to X$ to a CW-space~$X$,~$\cat(f)\le n$ if and only if there exists a
lift of $f$ with respect to $p_n^X:G_n(X)\to X$.
\end{thm}
We note that $\cat f\le\min\{\cat X,\cat Y\}$. We recall that the cohomology  cup-length of $X$ a lower bound for $\cat X$.
Similar statement holds true for maps: $\cat f\ge$ cup-length$(\Im f^*)$ (see for example~\cite{Sr}).

\

We call a map $f:X\to Y$ a $k$-equivalence if it induces isomorphism of homotopy groups in dimension $<k$ and an epimorphism in dimension $k$.

The following Proposition was proven in~\cite{DKR} and in~\cite{DS}:	
\begin{prop} \label{p:2} Let $f\colon X\to Y$ be an $(n-2)$-equivalence of $(r-1)$-connected pointed CW-complexes with $r\ge 0$. Then the map $*^{s+1}f$ is a $(sr+s+n-2)$-equivalence. 
\end{prop}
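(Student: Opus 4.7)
The plan is to translate the iterated join into an iterated smash product and then propagate a connectivity estimate through the smash by induction. Using the natural pointed homotopy equivalence $X_0*X_1*\cdots *X_s\simeq \Sigma^s(X_0\wedge X_1\wedge\cdots\wedge X_s)$, one identifies $*^{s+1}f$ with $\Sigma^s(f^{\wedge(s+1)})$. Since $s$-fold suspension of a $k$-equivalence between simply connected CW-complexes is a $(k+s)$-equivalence, it suffices to prove that $f^{\wedge(s+1)}\colon X^{\wedge(s+1)}\to Y^{\wedge(s+1)}$ is an $(sr+n-2)$-equivalence, after which applying $\Sigma^s$ delivers the advertised bound.

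The technical core is a smash-product lemma: if $\alpha\colon A\to B$ is a $p$-equivalence between $(a-1)$-connected CW-complexes and $\beta\colon A'\to B'$ is a $q$-equivalence between $(b-1)$-connected CW-complexes, then $\alpha\wedge\beta$ is a $\min(p+b,\,q+a)$-equivalence. To prove this I would factor $\alpha\wedge\beta=(\id_B\wedge\beta)\circ(\alpha\wedge\id_{A'})$. Since smashing preserves homotopy cofibers, the cofiber of $\alpha\wedge\id_{A'}$ is $C_\alpha\wedge A'$. Relative Hurewicz applied to the cofibration $A\to B\to C_\alpha$ shows that $C_\alpha$ is $p$-connected, and the connectivity-of-smash bound then yields that $C_\alpha\wedge A'$ is $(p+b)$-connected, so $\alpha\wedge\id_{A'}$ is a $(p+b)$-equivalence. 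A symmetric argument for $\id_B\wedge\beta$ produces a $(q+a)$-equivalence, and composing gives the lemma.

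The proposition now follows by induction on $s$. The base $s=0$ is exactly the hypothesis on $f$. For the inductive step, assume $f^{\wedge(s+1)}$ is an $(sr+n-2)$-equivalence between $((s+1)r-1)$-connected spaces, and write $f^{\wedge(s+2)}=f\wedge f^{\wedge(s+1)}$. Applying the smash lemma with $p=n-2$, $a=r$, $q=sr+n-2$, $b=(s+1)r$ makes both $p+b$ and $q+a$ equal to $(s+1)r+n-2$, completing the inductive step at the smash level. Suspending $s$ times then produces the required $(sr+s+n-2)$-equivalence for $*^{s+1}f$.

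The main obstacle is the cofiber-connectivity step in the smash lemma when $r$ is small, since for $r\le 1$ the spaces involved need not be simply connected and relative Hurewicz is not immediately applicable. For $r\ge 2$ every relevant CW-complex is simply connected throughout and the argument is clean. For $r=0,1$ one can either invoke Blakers--Massey directly on the mapping cylinder pair $(M_\alpha,A)$, or observe that after a single smash the connectivity has risen enough to enter the simply connected regime, so the failure can only affect the very first step, which coincides with the hypothesis and therefore requires no further work.
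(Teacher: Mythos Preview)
The paper does not supply its own proof of this proposition; it simply records that the statement was established in \cite{DKR} and \cite{DS}. Your argument via the identification $*^{s+1}f\simeq\Sigma^{s}(f^{\wedge(s+1)})$, the cofiber formula $C_{\alpha\wedge\id_{A'}}\simeq C_\alpha\wedge A'$, and the additivity of connectivity under smash is the standard route and is correct; the inductive arithmetic checks out exactly.

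One remark on your final paragraph: the obstacle you flag at $r=1$ is less serious than you suggest. The general relative Hurewicz theorem for a pair $(M_\alpha,A)$ with $A$ merely path-connected already yields $H_i(M_\alpha,A)=0$ for all $i\le p$ whenever the pair is $p$-connected; simple connectivity of $A$ is only needed for the \emph{isomorphism} $\pi_{p+1}\cong H_{p+1}$, which you never use. Since $C_\alpha$ is itself simply connected once $p\ge 1$ (van Kampen), your cofiber step gives a $p$-connected $C_\alpha$ outright, and the smash spaces $A\wedge A'$, $B\wedge A'$ are $(a+b-1)$-connected, hence simply connected as soon as $a+b\ge 2$. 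Thus for $r\ge 1$ your smash lemma and the whole induction go through without invoking Blakers--Massey; only the degenerate case $r=0$ would require separate handling.
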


\section{Reduction to the 2 vs 3 case}

\

All manifolds in the paper are considered to be connected, closed, orientable, and smooth.

\begin{prop}\label{le 5}
Let $N$ be a $n$-dimensional simply-connected manifold, $n\le 5$. Let $f:M\to N$ be a map of degree 1. Then $\cat (M) \geq \cat(N)$. 
\end{prop}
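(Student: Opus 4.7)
The plan is to combine Whitehead's dimension bound with the cohomological splitting of Proposition~\ref{split} to reduce the assertion to a single nontrivial case, which is then ruled out by an elementary homotopy-sphere argument.

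First I would invoke Whitehead's theorem: since $N$ is simply connected and $\dim N = n\le 5$, one gets $\cat(N)\le \lfloor n/2\rfloor\le 2$. Also $M$ is a closed connected $n$-manifold with $n\ge 2$, hence not contractible, so $\cat(M)\ge 1$ automatically. In particular, whenever $\cat(N)\le 1$ the inequality $\cat(M)\ge 1\ge \cat(N)$ is immediate; this already handles $n=2$ and $n=3$ outright, since by the classification of simply-connected closed surfaces and by Perelman's theorem the only such manifolds are $S^2$ and $S^3$, both of category $1$.

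The only remaining case is $\cat(N)=2$, which can occur only for $n=4$ or $n=5$. I would argue by contradiction: assume $\cat(M)\le 1$. Then $M$ is homeomorphic to $S^n$ by the Smale--Freedman--Perelman characterization recalled in the introduction. Proposition~\ref{split} now supplies a split monomorphism $f^*\colon H^*(N;\Z)\hookrightarrow H^*(S^n;\Z)$, which forces $H^k(N;\Z)=0$ for $0<k<n$. A simply-connected closed $n$-manifold with the integral cohomology of $S^n$ is homotopy equivalent to $S^n$ by iterated Hurewicz (yielding $(n-1)$-connectedness) followed by Whitehead applied to a generator $S^n\to N$ of $\pi_n$. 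Consequently $\cat(N)=1$, contradicting $\cat(N)=2$.

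No serious obstacle arises at this dimension range; the argument is essentially Whitehead's upper bound combined with the cohomological rigidity of the sphere. The substantive difficulty of the paper surely only appears in dimensions $6$, $7$, $8$, where $\cat(N)$ can reach $3$ (or even $4$ when $n=8$) and $M$ is no longer pinned down by a low-category assumption; there one expects to rely on the Ganea fibration characterization of $\cat$ (Theorem~\ref{Schwartz}) together with the join-connectivity estimate of Proposition~\ref{p:2}, which is the reason those results are placed in the preliminaries.
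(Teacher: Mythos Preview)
Your argument is correct and follows essentially the same route as the paper: bound $\cat(N)\le 2$ via Whitehead, then use the characterization of $\cat=1$ manifolds as (homotopy) spheres together with Proposition~\ref{split} to force $\cat(N)=1$ whenever $\cat(M)=1$. The paper's proof is more terse---it does not separate out the low dimensions $n=2,3$ or spell out the Hurewicz--Whitehead step---but the logic is identical; your final paragraph of commentary is extraneous to the proof itself.
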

\begin{proof}
By Theorem~\ref{WhTh} $\cat N\le 2$. If  $\cat(M)=1$ then $M$ is a homotopy sphere. By Proposition~\ref{split}  $N$ is a 
homology sphere. Since $N$ is simply connected, by the Hurewicz theorem it is a homotopy sphere. Then by the J. H. C. Whitehead
 theorem $N$ is homotopy equivalent to $S^n$. Hence $\cat (N)=1$. So, always $\cat(M)\geq \cat(N)$.
\end{proof}

\begin{prop}\label{1}
Let $f:M\to N$ be a map of degree one between simply connected manifolds of dimension $\le 8$. Suppose that $\cat M\ge 3$. Then $\cat M\ge \cat N$.
\end{prop}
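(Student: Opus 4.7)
The plan is to reduce to a single delicate case and then close it by an obstruction-theoretic argument that exploits the cohomological splitting of Proposition~\ref{split}.

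First the reductions. If $\cat M\ge 4$, then by Whitehead's Theorem $\cat N\le \dim N/2\le 4\le \cat M$, so there is nothing to prove. Hence I may assume $\cat M=3$ and aim to establish $\cat N\le 3$. Whitehead again gives $\cat N\le \dim N/2\le 4$, with $\cat N=4$ possible only if $\dim N=8$; for $\dim N\le 7$ the bound $\cat N\le 3$ is automatic. Since $f$ has degree one, $\dim M=\dim N$, so the only remaining case is $\dim M=\dim N=8$.

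For this case, by Theorem~\ref{Schwartz} the hypothesis $\cat M=3$ produces a section $s\colon M\to G_3(M)$ of $p_3^M$. The Ganea construction is functorial in pointed maps (via the path-space model $G_0(X)=PX$, post-composition with $f$, and iterated fiberwise join), so there is a canonical map $G_3(f)\colon G_3(M)\to G_3(N)$ satisfying $p_3^N\circ G_3(f)=f\circ p_3^M$. The composite $\tilde f:=G_3(f)\circ s\colon M\to G_3(N)$ is then a lift of $f$ through $p_3^N$. To establish $\cat N\le 3$ it suffices, by Theorem~\ref{Schwartz}, to lift $\id_N$ through $p_3^N$. The fiber of $p_3^N$ is $F=*^4\Omega N$, which is $6$-connected: $\Omega N$ is $0$-connected because $N$ is simply connected, and each join with $\Omega N$ raises connectivity by $2$. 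Since $N$ is simply connected and $8$-dimensional, standard obstruction theory (with trivial local coefficients, as $\pi_1(N)=0$) exhibits exactly one potentially nonzero obstruction to the existence of a section of $p_3^N$: the primary class $o\in H^8(N;\pi_7(F))$.

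The argument closes by naturality. The obstruction to lifting $f=\id_N\circ f$ through $p_3^N$ is $f^*(o)\in H^8(M;\pi_7(F))$, which vanishes because $\tilde f$ already realizes such a lift. The proof of Proposition~\ref{split} uses only Poincar\'e duality, which is valid with any abelian coefficient group on a closed oriented manifold, so $f^*\colon H^8(N;\pi_7(F))\to H^8(M;\pi_7(F))$ is a split monomorphism. Hence $o=0$, so $p_3^N$ admits a section, and $\cat N\le 3=\cat M$. The main technical hinge is the fortunate coincidence that the sole obstruction lives in \emph{top} cohomology, exactly where Poincar\'e duality forces $f^*$ to be injective; the $6$-connectedness of $F$ paired with $\dim N=8$ is what rules out earlier and secondary obstructions in this range.
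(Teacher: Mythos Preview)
Your proof is correct and follows essentially the same approach as the paper: reduce to the case $\dim M=\dim N=8$ with $\cat M=3$, observe that the fiber $F=\ast^4\Omega N$ of $p_3^N$ is $6$-connected so the sole obstruction to a section lies in $H^8(N;\pi_7(F))$, and kill it via naturality of the primary obstruction together with the injectivity of $f^*$ from Proposition~\ref{split}. Your explicit remark that Proposition~\ref{split} extends to arbitrary abelian coefficient groups (needed here since $\pi_7(F)$ is merely an abelian group) is a useful clarification.
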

\begin{proof}
By Theorem~\ref{WhTh} the LS-category of simply connected manifolds of dimension $\le 7$ does not exceed 3. So, we have to consider only the case of $n=8$ where $\cat M, \cat N\le 4$.
 Assume that $\cat N=4$.
Then there is no section of the Ganea fibration $p_3:G_3(N)\to N$. The fiber $F$ of this fibration is homotopy equivalent to a 6-connected space $\ast^4\Omega(N)$.
The primary obstruction $\kappa\in H^8(N;\pi_7(F))$ to a section of $p_3$ is the only obstruction. Since the primary obstruction is natural, $f^*(\kappa)$ is the primary
obstruction to the lift of $f$ with respect to $p_3$. By Proposition~\ref{split} $f^*(\kappa)\ne 0$. Hence by Proposition~\ref{ganea-maps} $\cat f>3$.  The inequality 
$\cat f\le\cat M$ completes the proof.
\end{proof}

We recall that if a $n$-manifold $M'$ is obtained from $M$ by a surgery then there is a bordism beetween them, i.e. $(n+1)$-dimenisonal manifold $W$
with boundary $\partial W=M\sqcup M'$. If the surgery was in dimension $k$, then $W$ is homotopy equivalent to $M$ with some $(k+1)$-cells attached.
If $M'$ is obtained by a $k$-surgery from $M$, then $M$ is obtained by $(n-k-1)$-surgery from $M'$.

\begin{prop}\label{2}
(a) Suppose that $M'$ is obtained by a $k$-surgery, $k\le 4$, from a simply connected $n$-manifold $M$ with $\cat M\le 2$. Then $\cat W\le 2$
where $W$ is a bordism of the surgery.

(b) Suppose that $M'$ is obtained from a simply connected manifold $M$ with $\cat M\le 2$ by a surgery in dimension 2. Then $\cat M'\le 2$.
\end{prop}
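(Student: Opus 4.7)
The plan is to view the surgery through its trace cobordism and compare the Ganea fibrations of $M$, $W$, and $M'$ in sequence. Write $W=M\times[0,1]\cup_\phi D^{k+1}\times D^{n-k}$ for the trace; then $\partial W=M\sqcup M'$, and $W$ deformation retracts both onto $M\cup_\phi e^{k+1}$ (by collapsing $M\times I$ onto $M\times\{0\}$ and the handle onto its core) and onto $M'\cup_\psi e^{n-k}$ (onto its cocore). Thus the inclusions $M\hookrightarrow W$ and $M'\hookrightarrow W$ are, homotopically, the attachments of a single cell of dimension $k+1$ and $n-k$ respectively. A section of $p_2^M$ given by Theorem~\ref{Schwartz} will be transported along these inclusions.

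First I would show $\cat W\le 2$. Compose a section $s\colon M\to G_2(M)$ with the functorial comparison map $G_2(M)\to G_2(W)$ to obtain a partial lift $\tilde s\colon M\to G_2(W)$ of $M\hookrightarrow W$. The only obstruction to extending $\tilde s$ across the attached $(k+1)$-cell lies in $\pi_k(\ast^3\Omega W)$. In case (a), $W$ is simply connected (since $M$ is and $k+1\ge 2$), so $\Omega W$ is path-connected and $\ast^3\Omega W$ is $4$-connected by Proposition~\ref{p:2}; hence $\pi_k(\ast^3\Omega W)=0$ for $k\le 4$. In case (b), $k=1$, and although $W$ need not be simply connected, the triple join of a nonempty space is automatically $1$-connected, so $\pi_1(\ast^3\Omega W)=0$. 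Either way the obstruction vanishes, giving a section of $p_2^W$ and $\cat W\le 2$.

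Next I would descend to $M'$. Restricting the resulting section $\sigma\colon W\to G_2(W)$ to $M'$ yields a lift $\tau\colon M'\to G_2(W)$ of $M'\hookrightarrow W$. Since $W\simeq M'\cup e^{n-k}$, this inclusion is an $(n-k-1)$-equivalence, so $\Omega M'\to \Omega W$ is an $(n-k-2)$-equivalence of $0$-connected spaces; Proposition~\ref{p:2} with $r=1$ and $s=2$ then gives that $\ast^3\Omega M'\to\ast^3\Omega W$ is an $(n-k+2)$-equivalence. In particular the homotopy fiber of the comparison $G_2(M')\to G_2(W)$ over $M'\to W$ is $(n-k+1)$-connected. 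An obstruction-theoretic lifting of $\tau$ along the pullback fibration $M'\times_W G_2(W)\to M'$ through the fiberwise $(n-k+2)$-equivalent map from $G_2(M')$ would then produce a section of $p_2^{M'}$, yielding $\cat M'\le 2$ by Theorem~\ref{Schwartz}.

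The main technical obstacle is this last step for the larger values $k=3,4$ in part (a), where the potential obstructions $H^{j+1}(M';\pi_j(\mathrm{hofib}))$ live in dimensions $j+1\in\{n-k+3,\dots,n\}$ that are not ruled out by dimension alone. The expected remedy is to exploit Poincar\'e duality on the $n$-manifold $M'$ to recast top-dimensional cohomological obstructions as low-dimensional homological ones, where the high connectivity of the homotopy fiber kills them, together with the freedom to modify $\sigma$ through the $[W,\ast^3\Omega W]$-torsor of sections of $p_2^W$. For $k\le 2$ in part (a) and for $k=1$ in part (b) the dimension count alone already forces all such obstructions to vanish, and no further adjustment is needed.
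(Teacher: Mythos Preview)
Your approach is structurally identical to the paper's: the trace $W$ is homotopy equivalent to the paper's $X=M\cup D^{k+1}$, and both proofs (i) extend a section of $p_2$ over the attached $(k{+}1)$-cell using the $4$-connectivity (respectively $1$-connectivity) of $\ast^3\Omega$, then (ii) descend to $M'$ by comparing $G_2(M')$ with the pullback of $G_2(X)$ along the $(n{-}k{-}1)$-equivalence $M'\to X$ and invoking Proposition~\ref{p:2}.

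The divergence is in the connectivity count at step (ii). The paper asserts that $\ast^3\Omega(f)$ is an $(n-k+4)$-equivalence, which for $k\le 4$ makes the comparison map $q$ an $n$-equivalence and the lift of $s$ through $q$ immediate---so the paper records no residual obstruction at all. Your reading of Proposition~\ref{p:2} with $r=1$, $s=2$, yielding only $(n-k+2)$, is the correct application of that proposition \emph{as stated in the paper} (the value $(n-k+4)$ would need $r=2$, i.e.\ $2$-connected loop spaces, which is not assumed). Thus the obstacle you flag for $k=3,4$ is genuine relative to Proposition~\ref{p:2} as written, and your Poincar\'e-duality sketch is not an argument: it neither identifies the obstruction classes concretely nor explains how duality or the torsor action on sections kills them. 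In short, you and the paper run the same proof; you have correctly noticed that Proposition~\ref{p:2} alone does not close the cases $k=3,4$, which the paper's claimed $(n-k+4)$ simply asserts away. One further slip on your side: in part (b) you again take $r=1$, but there $M'$ and $W$ are only connected, so $\Omega M'$ need not be $0$-connected; with $r=0$ the bound drops to $(n-1)$ and the same top-dimensional issue reappears (the paper's parallel claim of $(n-k+1)$ here is likewise one more than Proposition~\ref{p:2} delivers).
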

\begin{proof}
(a) The inequality $\cat M\le 2$ implies that there is a section $\sigma:M\to G_2(M)$ of the Ganea fibration $p_M:G_2(M)\to M$.
The bordism $W$ is homotopy equivalent to the space $$X=M\cup (\cup_\alpha D^{k+1}_\alpha)$$ obtained from $M$ by attaching a $(k+1)$-balls along disjoint  embedded $k$-spheres $S^k_\alpha$. We show that $\cat X\le 2$.
Note that the fiber of the Ganea fibration $p_X:G_2(X)\to X$ is 4-connected. Moreover, for any contractible subset $C\subset X$, the preimage  $p_X^{-1}(C)$ is 4-connected.
Let $p_X^M:G^M_2(X)\to M$ be the restriction of $p_X$ over $M$. Clearly $p_X^M$ contains $p_M$ as a subfibration. Then the restriction of $\sigma$ to $S_\alpha$ is a null-homotopic map
$\sigma|_\alpha:S^k_\alpha\to p_X^{-1}(D^{k+1})$ for all $\alpha$. Let $$\sigma'_\alpha:D^{k+1}_\alpha\to  p_X^{-1}(D^{k+1}_\alpha)$$ be an extension. Then the union $\sigma\cup(\cup_\alpha\sigma'_\alpha$ is a homotopy section of $p_X$. Then
the Homotopy Lifting Property produces an honest section of $p_X$.

(b) We note that $W$ is homotopy equivalent to the space $$X'=M'\cup(\cup_\alpha D^{n-2}_\alpha).$$ Hence the inclusion $j:M'\to X'$ is $(n-3)$-equivalence.
Then $\Omega(j):\Omega(M')\to\Omega(X')$ is an $(n-4)$-equivalence.
By Proposition~\ref{p:2} the induced map $$\ast^3\Omega(f):\ast^3\Omega(M')\to\ast^3\Omega X$$ between the fibers of $G_2(M')$ and $G_2(X')$ is a $(2+2+n-4)$-equivalence. This implies that the map $q:G_2(M')\to Z$ to the pull-back
in
\[
\xymatrix{
G_2(M') \ar[r]^q \ar[rd]^{p_{M'}} & Z \ar[d]^{p'} \ar[r]^{j'} & G_2(X) \ar[d]^{p_{X'}}\\
&  M' \ar[r]^j &X'}
\]
is a $n$-equivalence. A section $\sigma:X'\to G_2(X')$ defines a section $s:M'\to Z$. Since the map $q$ is a $n$-equivalence, the map $s$ can be lifted with respect to $q$. Thus,  $p_{M'}$ admits a section and, hence, by Theorem~\ref{Schwartz} $\cat M'\le 2$.
\end{proof}

\

\section{Main result}

The following theorem was proven by I. M. James~\cite{Ja}:
\begin{thm}\label{James}
Let $p\geq2$ and $X$ be a finite $(p-1)$-connected CW complex of dimension $N\leq p(n+1)$. Denote by $\alpha_X \in H^p(X;\pi_p(X))$ the fundamental class of $X$. Then $\cat(X)=n+1$ if and only if $$0\neq (\alpha_X)^{n+1} \in H^{p(n+1)}(X;\otimes^{n+1}\pi_p(X)),$$ where $(\alpha_X)^{n+1}$ is the $(n+1)$-fold cup product of $\alpha_X$.
\end{thm}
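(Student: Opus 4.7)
The plan is to combine Whitehead's theorem with Whitehead's fat-wedge characterization of LS-category and a one-step obstruction-theoretic argument. For the easy direction, suppose $(\alpha_X)^{n+1} \neq 0$. The classical cup-length lower bound (valid for cup products with values in any tensor product of coefficient modules) gives $\cat(X) \geq n+1$; combined with Whitehead's theorem $\cat(X) \leq N/p \leq n+1$ for the $(p-1)$-connected complex $X$, we get $\cat(X) = n+1$.

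For the nontrivial direction I would prove the contrapositive: if $(\alpha_X)^{n+1} = 0$, then $\cat(X) \leq n$. Recall Whitehead's diagonal characterization: $\cat(X) \leq n$ is equivalent to the iterated diagonal $\Delta \colon X \to X^{n+1}$ factoring up to homotopy through the fat wedge $T^n(X) \subset X^{n+1}$, which is equivalent to the reduced diagonal $\overline\Delta \colon X \to X^{n+1}/T^n(X) \simeq X^{\wedge(n+1)}$ being null-homotopic. So the task becomes showing $\overline\Delta \simeq *$.

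Next I analyze connectivity and dimension. Since $X$ is $(p-1)$-connected, $X^{\wedge(n+1)}$ is $((n+1)p - 1)$-connected, and Hurewicz together with iterated K\"unneth yields $\pi_{(n+1)p}(X^{\wedge(n+1)}) \cong \otimes^{n+1}\pi_p(X)$. Given $\dim X \leq (n+1)p$, obstruction theory for a map from $X$ into an $((n+1)p-1)$-connected target puts us in the one-obstruction regime: there is a single possibly nonzero obstruction to nullhomotopy, a class in $H^{(n+1)p}(X;\otimes^{n+1}\pi_p(X))$, and $\overline\Delta$ is null-homotopic if and only if this primary obstruction vanishes.

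The main obstacle is identifying this primary obstruction with $(\alpha_X)^{n+1}$. For this, I would argue that under the Hurewicz-plus-K\"unneth isomorphism the fundamental class $\iota \in H^{(n+1)p}(X^{\wedge(n+1)};\otimes^{n+1}\pi_p(X))$ corresponds to the reduced external product $\alpha_X \wedge \cdots \wedge \alpha_X$, and the defining property of the reduced diagonal converts external into internal cup products, giving $\overline\Delta^{*}(\alpha_X \wedge \cdots \wedge \alpha_X) = (\alpha_X)^{n+1}$. Since the primary obstruction to nullhomotopy of $\overline\Delta$ is exactly $\overline\Delta^{*}(\iota)$, it coincides with $(\alpha_X)^{n+1}$. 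Hence $(\alpha_X)^{n+1} = 0$ forces $\overline\Delta \simeq *$, yielding $\cat(X) \leq n$ as required.
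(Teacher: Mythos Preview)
Your argument is correct and is the standard proof of James' theorem. The paper itself does not supply a proof of this statement: it simply refers the reader to \cite[Proposition~2.50]{CLOT}. The argument you outline---combining the cup-length lower bound and Whitehead's upper bound for one direction, and for the other using the fat-wedge/reduced-diagonal characterization together with the single-obstruction situation afforded by the connectivity of $X^{\wedge(n+1)}$ and the dimension bound on $X$---is essentially the proof given in that reference. Your identification of the primary obstruction with $(\alpha_X)^{n+1}$ via the fundamental class of the smash power and the external-to-internal product formula for $\overline\Delta^{*}$ is the standard computation, so there is no gap.
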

\begin{proof}
    For the proof see \cite[Proposition 2.50]{CLOT}.
\end{proof}

\begin{thm}
Let $N$ be a $3k$-dimensional $(k-1)$connected manifold. Let $f:M\to N$ be a map of degree 1. Then $\cat (M) \geq \cat(N)$. 
\end{thm}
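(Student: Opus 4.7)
The plan is to reduce the claim to a cup-length bound for $M$, using James's theorem to convert $\cat(N)$ into a nonvanishing cup product of powers of the fundamental class of $N$, and Proposition~\ref{split} to transport that product across $f$. Since $N$ is $(k-1)$-connected of dimension $3k$, Whitehead's theorem gives $\cat(N)\le 3$.

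The main case is $\cat(N)=3$. Here I would apply James's theorem with $p=k$ and $n+1=3$: the dimension hypothesis $\dim N=3k=p(n+1)$ is satisfied with equality, yielding
\[
\alpha_N^{3}\;\neq\;0 \quad\text{in}\quad H^{3k}\bigl(N;\otimes^{3}\pi_k(N)\bigr).
\]
Proposition~\ref{split}, whose proof extends verbatim from coefficient rings to arbitrary coefficient abelian groups (Poincar\'e duality holds with any abelian-group coefficients on an oriented closed manifold), makes $f^{*}$ injective on this cohomology group. Consequently
\[
(f^{*}\alpha_N)^{3}\;=\;f^{*}(\alpha_N^{3})\;\neq\;0 \quad\text{in}\quad H^{3k}\bigl(M;\otimes^{3}\pi_k(N)\bigr).
\]
A nonvanishing cup product of three positive-degree cohomology classes---even with coefficients in distinct abelian groups and the product landing in their tensor product---still forces $\cat(M)\ge 3$ by the standard argument using categorical open covers: each factor lifts relative to a contractible open set in $X$, so a product of more factors than $\cat(X)+1$ must vanish regardless of coefficient pairing. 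Hence $\cat(M)\ge 3=\cat(N)$.

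The remaining cases $\cat(N)\le 2$ are disposed of directly. If $\cat(N)=1$, then $N$ is a homotopy sphere, and Proposition~\ref{split} implies $f^{*}$ of the top cohomology class is nonzero, so $M$ is non-contractible and $\cat(M)\ge 1$. If $\cat(N)=2$ and, for contradiction, $\cat(M)\le 1$, then $M$ is a homotopy sphere by Smale--Freedman--Perelman; the splitting $f_{*}\colon H_{*}(M)\to H_{*}(N)$ from Proposition~\ref{split} then forces $H_{*}(N)\cong H_{*}(S^{3k})$, and simple connectivity of $N$ (valid for $k\ge 2$) upgrades this to a homotopy equivalence $N\simeq S^{3k}$, contradicting $\cat(N)=2$. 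The only point requiring care is the generalized cup-length lower bound with mixed coefficient modules, but the usual categorical-open-cover proof carries through unchanged; the edge case $k=1$ (giving a $3$-manifold) is covered by Rudyak's earlier verification in dimensions $\le 4$~\cite{Ru3}.
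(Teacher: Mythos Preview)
Your proof is correct and follows essentially the same route as the paper: bound $\cat(N)\le 3$ via Whitehead, dispose of $\cat(N)\le 2$ by the homotopy-sphere argument, and in the main case $\cat(N)=3$ invoke James's theorem to get $\alpha_N^3\ne 0$, pull back through the injective $f^*$, and conclude via the cup-length lower bound. You are in fact more careful than the paper in two places---extending Proposition~\ref{split} to abelian-group coefficients and treating the edge case $k=1$ separately via~\cite{Ru3}---both of which the paper leaves implicit.
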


\begin{proof}
By Theorem~\ref{WhTh} $\cat(N)\leq 3$. Let us assume that $\cat(N)=2$. If $\cat(M)=1$ then $M$ is a homotopy sphere, so $N$ is a homotopy sphere in view of $\deg f=1$, hence $\cat (N)=1$, which is a contradiction.

  \m Now let $\cat N=3$. The case $\cat (M)=1$ is impossible by previous argument. Suppose that $\cat(M)=2$. Let us  assume that $\pi_k(N)\neq 0$. Let $u\in H^k(N;\pi_k(N))$ be the fundamental class. Since $\dim =3k$, we conclude that $u^3\neq 0$, by the James Theorem.  Since $f^*: N \to M$ is a monomorphism (see Proposition~\ref{split}) by previous proposition, we conclude that $(f^*(u))^3\neq 0$.\\
 \end{proof}

 \begin{cor}\label{=6}
Let $N$ be a $6$-dimensional simply-connected manifold. Let $f:M\to N$ be a map of degree 1. Then $\cat (M) \geq \cat(N)$.
 \end{cor}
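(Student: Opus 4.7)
The plan is to obtain this corollary simply by specializing the preceding theorem to $k=2$. A $(k-1)$-connected manifold of dimension $3k$ with $k=2$ is precisely a simply-connected manifold of dimension $6$, so the hypotheses of the theorem match those of the corollary verbatim and its conclusion gives the desired inequality $\cat(M)\ge\cat(N)$. Strictly speaking no further work is needed; I would, however, unpack how the theorem's argument specializes in this dimension and flag one edge case that the proof leaves implicit.

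To unpack: by Whitehead's theorem $\cat(N)\le 3$, and the case $\cat(N)\le 1$ is immediate. If $\cat(N)=2$, I would rule out $\cat(M)=1$ by the standard argument: if $M$ were a homotopy sphere, Proposition~\ref{split} would force $N$ to be a homology sphere, hence a homotopy sphere by simple connectivity, yielding $\cat(N)=1$, a contradiction. If $\cat(N)=3$, I would invoke James's theorem with $p=2$, $n=2$ (noting $\dim N = 6 = p(n+1)$ on the boundary) to produce the fundamental class $u \in H^2(N;\pi_2(N))$ with $u^3 \ne 0$ in $H^6(N;\pi_2(N)^{\otimes 3})$. Since $f^*$ is a cohomology monomorphism by Proposition~\ref{split}, $(f^*u)^3 = f^*(u^3) \ne 0$, so the cup-length of $M$ is at least $3$ and $\cat(M)\ge 3 = \cat(N)$.

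The one delicate point is that James's theorem yields a nontrivial fundamental class only when $\pi_2(N)\ne 0$. I would handle this by applying Whitehead's theorem a second time: if $\pi_2(N)=0$, then $N$ is $2$-connected, so $\cat(N)\le 6/3 = 2$, and the case $\cat(N)=3$ does not arise. Thus this potential obstacle is vacuous and the specialization of the preceding theorem goes through cleanly.
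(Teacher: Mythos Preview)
Your approach is correct and is exactly what the paper intends: the corollary is stated immediately after the theorem with no separate proof, so it follows by taking $k=2$. Your additional handling of the case $\pi_2(N)=0$ via Whitehead's theorem is in fact more careful than the paper's own proof of the preceding theorem, which simply writes ``Let us assume that $\pi_k(N)\neq 0$'' without disposing of the complementary case.
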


\

\begin{lemma}\label{main lemma}
     Let $f:M\to N$ be a map of degree 1
between $n$-dimensional simply connected manifolds where $n=7$ or $8$ that induces an isomorphism  of homotopy groups $f_*:\pi_i(M)\to \pi_i(N)$ for $i\le n-5$. Then $\max\{\cat (f),2\} \geq \cat(N)$.
\end{lemma}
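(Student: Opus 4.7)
The plan is to produce a section of the Ganea fibration $p_N\colon G_k(N)\to N$ with $k=\cat(M)$; by Theorem~\ref{Schwartz} this will give $\cat(N)\le k$. The case $k=1$ is disposed of directly: then $M$ is a homotopy sphere, so by Proposition~\ref{split} $N$ is a homology sphere, hence a homotopy sphere with $\cat(N)=1$. Assume $k\ge 2$ from now on.

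Let $\sigma_M\colon M\to G_k(M)$ be a section of the Ganea fibration and let $G_k(f)\colon G_k(M)\to G_k(N)$ be the natural map of Ganea spaces. Set $\tilde f := G_k(f)\circ\sigma_M\colon M\to G_k(N)$; this is a lift of $f$ through $p_N$. The fiber $F_N = \ast^{k+1}\Omega(N)$ is $2k$-connected because $N$ is simply connected. Replacing $f$ by the inclusion $M\hookrightarrow M_f\simeq N$ via the mapping cylinder, the $(n-5)$-equivalence hypothesis makes the pair $(M_f,M)$ be $(n-5)$-connected, so $M_f$ admits a relative CW-structure over $M$ with cells only in dimensions $\ge n-4$. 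Producing a section of $p_N$ then amounts to extending $\tilde f$ as a section across these relative cells.

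For a relative cell of dimension $n_j\ge n-4$, the obstruction to extending $\tilde f$ as a section is an element of $\pi_{n_j-1}(F_N)$, and the combined cochain obstruction assembles into a class in $H^{n_j}(M_f,M;\pi_{n_j-1}(F_N))$. By the long exact sequence of the pair and the split injectivity of $f^*$ (Proposition~\ref{split}),
\[
H^{n_j}(M_f,M;A)\ \cong\ \mathrm{coker}\bigl(f^*\colon H^{n_j-1}(N;A)\to H^{n_j-1}(M;A)\bigr).
\]
Two complementary isomorphism ranges for $f^*$ make this cokernel vanish. First, the hypothesis combined with Hurewicz and universal coefficients gives $f^*$ an isomorphism on $H^j(-;A)$ for $j\le n-5$. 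Second, Poincar\'e duality together with degree one yields a transfer $f^!=PD_M\circ f^*\circ PD_N^{-1}$ satisfying $f_*\circ f^!=\mathrm{id}$, so $f^!$ is an isomorphism wherever $f_*$ is; since $f_*$ is an isomorphism on $H_i(-;A)$ for $i\le n-5$, it follows that $f^*$ is an isomorphism on $H^j(-;A)$ for $j\ge 5$.

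Consequently $H^{n_j}(M_f,M;A)=0$ whenever $n_j-1\in[0,n-5]\cup[5,n]$. The only residual possibility is $n_j-1\in[n-4,4]$, which is nonempty only when $n\le 8$; in that range $n_j-1\le 4\le 2k$ using $k\ge 2$, so the coefficient $\pi_{n_j-1}(F_N)$ itself is zero. Either way each obstruction class vanishes, so standard inductive obstruction theory lets $\tilde f$ extend across all relative cells to a section $\sigma_N\colon N\to G_k(N)$, proving $\cat(N)\le k=\cat(M)$. The main subtlety is aligning the two isomorphism ranges of $f^*$ with the high connectivity of $F_N$ in the residual low-dimensional range; once this is set up using $k\ge 2$, the cell-by-cell extension is routine.
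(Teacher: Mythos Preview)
Your argument is correct and is in fact cleaner than the paper's. Both proofs rest on the same underlying fact---that $f^*$ is an isomorphism on $H^j$ for $j\le n-5$ (from the hypothesis) and for $j\ge 5$ (from Poincar\'e duality applied to a degree-one map)---and use this to kill the obstructions to sectioning the Ganea fibration $G_k(N)\to N$. The difference is in packaging. The paper proceeds by cases: it invokes Corollary~\ref{=6} for $n=6$, observes that $f$ is a homotopy equivalence for $n\ge 9$, cites Proposition~\ref{1} for $\cat M\ge 3$ when $n\le 8$, and then for the residual case $\cat M=2$, $n\in\{7,8\}$ climbs the Moore--Postnikov tower of $p_2^N$ stage by stage, explicitly trivializing each principal $K(\pi_j(F),j)$-bundle using the injectivity/bijectivity of $f^*$ in the relevant degree. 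Your approach replaces all of this with a single relative obstruction-theory argument on the pair $(M_f,M)$: the relative cells sit in dimensions $\ge n-4$, the obstruction groups $H^{n_j}(M_f,M;\pi_{n_j-1}(F_N))$ are cokernels of $f^*$ in degree $n_j-1$, and these vanish either because $f^*$ is onto there or, in the residual window $n_j-1\in[n-4,4]$ (nonempty only for $n\le 8$), because the fiber $\ast^{k+1}\Omega N$ is $2k$-connected with $2k\ge 4$. What your approach buys is uniformity: one argument covers all $n$ and all $k\ge 2$ at once, without separating out $\cat M\ge 3$ or treating $n=7$ and $n=8$ differently. What the paper's approach buys is explicitness: the Moore--Postnikov lifts are constructed concretely, which makes visible exactly where each isomorphism of $f^*$ is used. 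One minor remark: you invoke Proposition~\ref{split} with coefficients $A=\pi_{n_j-1}(F_N)$, an abelian group rather than a ring; the statement in the paper says ``coefficient ring,'' but the Poincar\'e-duality splitting in its proof works verbatim for any abelian group, so this is harmless.
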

\begin{proof}
We may assume that $\cat N=3$. If $n=7$ this is a maximal value for $\cat N$. If $n=8$ and $\cat N=4$, by Theorem~\ref{James} the cup-length of $N$ is 4. Then
by Proposition~\ref{split} the cup-length of the image $\Im f^*$ is 4. Since $\cat f\ge$ cup-length$(\Im f^*)$, we may assume that $\cat f\le 2$.

    Now, since $M$ and $N$ are simply connected, we have $\pi_2(M)$ isomorphic to $H_2(M)$ and  $\pi_2(N)$ is isomorphic to $H_2(N)$ by the Hurewicz theorem. Hence, the map $f$ induces an isomorphism $f_*:H_2(M)\to H_2(N)$.
    
Since $\cat(f)\le 2$, By Theorem~\ref{ganea-maps} there is a lift $\phi:M\to G_2(N)$  of $f$ with respect to the Ganea fibration $p_2^N$,
\[\begin{tikzcd}
	&& {G_2(N)} \\
	\\
	{M} && {N}
	\arrow["{p_2^N}", from=1-3, to=3-3]
	\arrow["\phi", from=3-1, to=1-3]
	\arrow["f"', from=3-1, to=3-3].
\end{tikzcd}\]
We show that there is a section $s:N\to G_2(N)$.

Since $M$ and $N$ are simply connected, the condition of the lemma imply that  $f_*:H_i(M)\to H_i(N)=\pi_i(N)$ are isomorphisms for $i\le n-5$ by the classic Whitehead theorem. Then by the Poincare duality, $f^*:H^i(N)\to H^i(M)$ are isomorphisms for $i\ge 5$.

Note that the fiber $F=\ast^3\Omega N$  of the fibration $p=p_2^N:G_2(N)\to N$ is  is the join product of 3 copies of connected space $\Omega N$
and hence it is $4$-connected. Therefore in the Moore-Postnikov tower for $p:G_2(N)\to N$ 
we have $N=Z_1=Z_2=Z_3=Z_4=Z_5$:

% https://q.uiver.app/#q=WzAsOCxbMCw1LCJHXzIoTikiXSxbMiw1LCJOXjUiXSxbMSw1LCJaXzEiXSxbMSw0LCJaXzIiXSxbMSwzLCJaXzMiXSxbMSwyLCJaXzQiXSxbMSwxLCJaXzUiXSxbMSwwLCJaXzYiXSxbMCwyXSxbMiwxXSxbMCwzXSxbMCw0XSxbMCw1XSxbMCw2XSxbMCw3XSxbNywxXSxbNiwxXSxbNSwxXSxbNCwxXSxbMywxXSxbNyw2XSxbNiw1XSxbNSw0XSxbNCwzXSxbMywyXV0=
\[\xymatrix{
& Z_8\ar[d]^{q_7} \\
           & {Z_7}\ar[d]^{q_6}  \ar[r]^-{\kappa_7} & K(\pi_7(F),8)\\
	{G_2(N)}\ar[ruu]^{p_8} \ar[ru]_{p_7} \ar[r]^{p_6} \ar[rd]^p &  {Z_6} \ar[d]^{q_5} \ar[r]^-{\kappa_6} & K(\pi_6(F),7)\\
           {M} \ar[u]^\phi \ar[r]^f & {N}\ar[r]^-{\kappa_5} & K(\pi_5(F),6)}
	\]
We recall that the maps $q_k:Z_{k+1}\to Z_k$ are principal bundles with fibers $K(\pi_k(F),k)$. 
Thus, there are pull-back diagrams

$$\begin{CD}
Z_{k+1} @>\kappa_k'>> E_{k+1}\\
@Vq_kVV @V\bar p_kVV\\
Z_k @ >\kappa_k>> K(\pi_k(F),k+1)\\
\end{CD}
$$
where  $E_n$ are contractible and $\bar p_n$ are principal bundle with fibers topological abelian groups having homotopy type of the Eilenberg-Maclane spaces  $K(\pi_{n-1}(F),n-1)$
(see~\cite{Mc} or~\cite{Dr0}).

By commutativity of the above diagrams we obtain $$\kappa_5f=\kappa_5q_5p_6\phi=\bar p_5\kappa_5'p_6\phi.$$ 
Since the map $\bar p_5\kappa_5'p_6\phi$ factors through a contractible space $E_6$, the map $\kappa_5f$ is null-homotopic. We recall that $f^*$ is injective on the 5-cohomology.
Since the map $\kappa_5f$ represents the image $f^*[\kappa_5]$ of the cohomology class represented by 
$\kappa_5:N\to K(\pi_5(F),6)$, the map $\kappa_5$
is null-homotopic. Therefore, $\kappa_5$ admits a lift and, hence,  $q_5$ admits a section. Since $q_5$ is a principal bundle that has a section, it is a trivial bundle over $N$ with the fiber $K(\pi_5(F),5)$.

Thus, $$Z_6\cong  K(\pi_5(F),5)\times N.$$ The projection $pr:Z_6\to K(\pi_5(F),5)$ to the fiber defines a cohomology class $$\alpha=[pr\circ p_6\circ\phi]\in H^5(M;\pi_5(F)).$$
Since $f^*$ is an isomorphism of 5-cohomology, there is $\beta\in H^5(N;\pi_5(F))$ with $f^*(\beta)=\alpha$. The class $\beta$ can be represented by a map
$$s:N\to K(\pi_5(F),5)$$ which defines a section $s_5:N\to Z_6$ of $q_5$ such that the map $$p_6\phi=(pr\circ p_6\circ\phi,f):M\to K(\pi_5(F),5)\times N$$ is homotopic to $$s_5 f=(sf,f):M\to K(\pi_5(F),5)\times N.$$

Next we show that the map $s_5:N\to Z_6$ admits a lift with respect to $q_6$. This is equivalent to say that $$\kappa_6s_5:N\to K(\pi_6(F),7)$$ is null-homotopic.
Since $p_7\phi$ is a lift of the map $p_6\phi$ with respect to $q_6$, the composition 
$$
\kappa_6p_6\phi:M\to K(\pi_6(F),7)
$$
is null-homotopic. Therefore, $\kappa_6s_5f$ is null-homotopic. This means that $f^*$ takes the cohomology class $[\kappa_6s_5]$ to zero. Since by proposition~\ref{split} the homomorphism $f^*$ is injective, we obtain that $[\kappa_6s_5]=0$. Thus, the map $\kappa_6s_5$ is null-homotopic.

Suppose that $n=7$.
Since the map $p_7$ is 7-equivalence i.e. it is an isomorphism for $i$-dimensional homotopy groups for $i<7$ and epimorphism for $i=7$, there is a homotopy lift
$\sigma :N\to G_2(N)$ of the map $s_6:N\to Z_7$ with respect to $p_7$. Then $\sigma $ is a homotopy section of $p$. Since $p$ is a fibration, it admits a regular section.
Therefore, $\cat N\le 2$.

If $n=8$, we construct a lift $s_7:N\to Z_8$ of $s_5$ with respect to $q_6q_7$ and complete the proof similarly to the case of $n=7$. 
For that we modify $s_6$ to a such a map that $s_6f$ becomes homotopic to $p_7\phi$.

Since $q_6s_6=s_5$ is an embedding, the map $q_6$ is defined by a section of $q_6$ on $s_5(N)$.
Since  the map $$\kappa_6s_5:N\to K(\pi_7(F),7)$$ is null-homotopic,
the restriction of $q_6$ over $s_5(N)$ is a trivial bundle, $$q_6^{-1}(s_5(N))\cong s_5(N)\times K(\pi_6(F),6).$$
Since $p_6\phi$ is homotopic to $s_5f$ and $q_6p_7\phi=p_6\phi$, by the Homotopy Lifting property of $q_6$, we can homotop the map $p_7\phi$ to a map $\psi:M\to Z_7$ such that $q_6\psi=s_5f$. Thus the map $$\psi:M\to s_5(N)\times K(\pi_6(F),6)$$
has two components $(s_5f,\psi_1)$. Since $f$ induces an isomorphism of 6-dimensional cohomology groups, there is a map $$\xi:N\to K(\pi_6(F),6)$$ such that $f^*$ takes the cohomology class $[\xi]\in H^6(N;\pi_6(F))$ to the cohomology class $[\psi_1]\in H^6(M;\pi_6(F))$.
Thus, $\psi_1$ is homotopic to $\xi f$. We consider a section $$s':s_5(N)\cong N\to q_6^{-1}(s_5(N))\cong s_5(N)\times K(\pi_6(F),6)$$ defined by $\xi$. Then $\psi$ is homotopic to the map $s's_5f$.
Since $p_8\phi$ is a lift of the map $p_7\phi$ with respect to $q_7$, the composition 
$$
\kappa_7p_7\phi:M\to K(\pi_6(F),7)
$$
is null-homotopic. Hence $\kappa_7\psi$ is null-homotopic and the map $\kappa_7 s's_5f$ is null-homotopic. 
This means that $f^*$ takes the cohomology class $[\kappa_7 s's_5]$ to zero. Since the homomorphism $f^*$ is injective, we obtain that $[\kappa_7s's_5]=0$. Thus, the map $\kappa_7s's_5$ is null-homotopic and, hence, there is a lift $s_7:N\to Z_8$ of $s's_5$ with respect to $q_7$. 

Since the map $p_8$ is 8-equivalence, there is a homotopy lift
$\sigma :N\to G_2(N)$ of the map $s_7:N\to Z_8$ with respect to $p_8$. Then $\sigma $ is a homotopy section of $p$. Since $p$ is a fibration, it admits a regular section.
Therefore, $\cat N\le 2$.
\end{proof}

\

\begin{prop}\label{surgery}
A degree one map $f:M\to N$ of a $n$-dimensional, $n=7,8$ simply connected spin manifold  with $\cat M\le 2$ is bordant by means of surgery in dimension 2 and 3 to a a degree one map $f':M'\to N$  with $\cat f'\le 2$ that induces isomorphisms $f':\pi_i(M')\to\pi_i(N)$ for $i\le n-5$.
\end{prop}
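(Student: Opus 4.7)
The plan is to perform two successive rounds of classical surgery on $M$, first in dimension $2$ and then in dimension $3$, so as to kill the kernels of $f_*$ on $\pi_2$ and then on $\pi_3$.

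First I would deal with $\pi_2$. Since $M$ and $N$ are simply connected, Hurewicz identifies $\pi_2$ with $H_2$, and by Proposition~\ref{split} the map $f_*\colon H_2(M)\to H_2(N)$ is a split epimorphism whose kernel $K_2$ is finitely generated. Represent each generator of $K_2$ by an embedded $2$-sphere in $M$; this is possible by general position since $n\ge 7>4$. The normal bundle $\nu_S$ of such an $S$ is classified by an element of $\pi_1(SO(n-2))=\Z_2$ detected by $w_2(\nu_S)=w_2(TM)|_S+w_2(TS)=0$, so $M$ being spin forces $\nu_S$ to be trivial. Choose framings compatible with the spin structure (essentially unique since $\pi_2(SO(n-2))=0$ for $n\ge 7$) and perform the $2$-surgeries. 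Because each representing sphere bounds a disk in $N$, the map $f$ extends across the $3$-handles of the surgery trace, yielding on the new boundary a degree one map $f_1\colon M_1\to N$ of simply connected spin manifolds with $f_{1*}$ an isomorphism on $\pi_2$.

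Next I would deal with $\pi_3$. Since $f_1$ is now an isomorphism on $\pi_2$ and still has degree one, Proposition~\ref{split} gives surjectivity of $f_{1*}$ on $H_3$, and then the relative Hurewicz theorem applied to the mapping cylinder pair $(M_{f_1},M_1)$ shows that $f_{1*}$ is also surjective on $\pi_3$. Let $K_3\subset\pi_3(M_1)$ be its kernel, finitely generated by Serre's theorem. Represent its generators by embeddings $S^3\hookrightarrow M_1$ (possible since $n\ge 7>6$); their normal bundles are automatically trivial because $\pi_2(SO(n-3))=0$ for $n-3\ge 3$, so no spin condition is needed at this step. Performing the $3$-surgeries and extending $f_1$ across the $4$-handles of the trace produces a degree one map $f'\colon M'\to N$. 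The $3$-surgery does not affect $\pi_2$, since the trace is obtained by attaching $4$-handles and $M'$ is obtained from the trace by attaching $(n-3)$-cells with $n-3\ge 4$; on $\pi_3$, each element of $K_3$ has been killed by the corresponding attached $4$-cell, while surjectivity of $f'_*$ follows from the same cellular dimension count applied to the trace.

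The main technical point is the first step: arranging the $2$-surgery framings so that the spin structure extends across the trace and $f$ extends simultaneously. Since the relevant framing torsor and spin obstruction groups vanish in the range $n\ge 7$, this goes through, and the degree-one property is preserved across each surgery by the standard cobordism identity $f_*[M]=f_{1*}[M_1]$ in $H_n(N)$ applied to the extension of $f$ over the trace.
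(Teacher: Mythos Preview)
Your argument is correct and follows essentially the same approach as the paper: do $2$-surgery on generators of $\ker(f_*\colon\pi_2(M)\to\pi_2(N))$, using the spin hypothesis to trivialize the normal bundles via $w_2$, and then do $3$-surgery using $\pi_2(SO(n-3))=0$. If anything, your write-up is more careful than the paper's in several places --- you justify surjectivity of $f_{1*}$ on $\pi_3$ via relative Hurewicz and the degree-one splitting on $H_3$, you invoke Serre for finite generation of $K_3$, and you explain why the $3$-surgery leaves $\pi_2$ unchanged --- points the paper leaves implicit. Your remarks about choosing framings so that the spin structure extends are correct but not actually needed downstream, since the $3$-surgery step requires no spin hypothesis.
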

\begin{proof}
For $n=7,8$ by Proposition~\ref{split} the induced homomorphism $f_*:H_2(M)\to H_2(N)$ is surjective with  a finitely generated kernel $K=ker f_*$ which is by
the Hurewicz Theorem  isomorphic to $ker\{f_*:\pi_2(M)\to\pi_2(N)\}$. Let $S_1,\dots, S_k$ be disjoint 2-spheres smoothly embedded in $M$ that
generate $K$. For each sphere $S_i$ its normal bundle $\nu_i$ is classified by a map $\nu_i:S_i\to BSO(n-2)$. The map $\nu_i$  is null-homotopic, since otherwise the evaluation of the Stiefel-Whitney class $w_2\in H^2(BSO(n-2);\mathbb Z_2)$ on the image $(\nu_i)_*([S_i])\in H_2(BSO(n-2);\mathbb Z_2)$ would be nonzero.
The latter  would contradict to the spin condition on $M$. Thus, we can perform a surgery to obtain a bordism $W$ of $M$ to $M'$ together with a map $\bar f:W\to N$.
The restriction  $f':M'\to N$ of $\bar f$ to $M'$ has degree one and $f'$ induces an isomorphism of 2-dimensional homotopy groups. For $n=7$ by Proposition~\ref{2} (b)
$\cat M'\le 2$ and, hence, $\cat f'\le 2$.

When $n=8$ by the above we may assume that $f:M\to N$ induces an isomorphism of 2-dimensional homotopy groups.
Since $\pi_3(BSO(n-3))=0$, we obtain that the normal bundle of any embedded 3-sphere in $M'$ is trivial.
Hence we can perform surgery on generators of  the kernel $ker\{f_*:\pi_3(M')\to\pi_3(N)\}$ to get a bordism $W$ between $M$ and $M'$ together with a map $\hat f:W\to N$ that agrees with $ f$ on $M$ such that the restriction $f':M'\to N$ of $\hat f$ to $M'$ has degree one and it induces isomorphisms of homotopy groups of $\dim \le 3$. 
By Proposition~\ref{2} (a) we obtain that $\cat W\le 2$. Hence $\cat\hat f\le 2$. By Theorem~\ref{ganea-maps} $\hat f$ admits a lift $\hat\phi:W\to G_2(N)$ of $\hat f$ with respect to $p_2^N$.
The restriction of $\hat\phi$ to $M'$ is a lift of $f':M'\to N$ with respect to $p^N_2$. Then by Theorem~\ref{ganea-maps} $\cat f'\le 2$.
\end{proof}

\begin{thm}\label{main}
A degree one map $f:M\to N$ between simply connected spin $n$-manifold for $n\le 8$ satisfies the Rudyak conjecture,  $$\cat M\ge \cat N.$$
The spin condition can be dropped when $n\le 6$.
\end{thm}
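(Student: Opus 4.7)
The plan is to prove the theorem by combining the results already established in the paper through a case analysis on the dimension $n$ and on the value of $\cat M$. The low-dimensional cases $n\le 5$ and $n=6$ are already handled by Proposition~\ref{le 5} and Corollary~\ref{=6} respectively, so I focus on $n\in\{7,8\}$.

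First, if $\cat M\ge 3$ then Proposition~\ref{1} immediately gives $\cat M\ge\cat N$. If $\cat M=1$, then $M$ is a homotopy sphere by the Smale--Freedman--Perelman classification; Proposition~\ref{split} shows that $f_*$ is a split epimorphism on integral homology, and together with the simple connectivity of $N$ this forces $N$ to be a homotopy sphere, so $\cat N=1=\cat M$.

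The substantive case is therefore $\cat M=2$ with $n\in\{7,8\}$. Since $M$ is simply connected it is orientable, and since $n\ge 7$, Theorem~\ref{nonspin} forces $M$ to be spin; otherwise $\cat M\ge 3$, contradicting our assumption. I then apply Proposition~\ref{surgery} to replace $f:M\to N$ by a bordant degree-one map $f':M'\to N$ between simply connected $n$-manifolds such that $f'_*:\pi_i(M')\to\pi_i(N)$ is an isomorphism for all $i\le 3$. The surgeries used in Proposition~\ref{surgery} are in dimensions $2$ and $3$, so Proposition~\ref{2}(a), applied inductively across each elementary surgery (each intermediate manifold remains simply connected because 2- and 3-surgeries do not alter $\pi_1$), guarantees $\cat M'\le 2$. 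Since $n-5\le 3$ in both remaining dimensions, the connectivity hypothesis of Lemma~\ref{main lemma} is satisfied by $f'$, and I conclude $\cat N\le\cat M'\le 2=\cat M$, as desired.

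The main potential obstacle in assembling these pieces lies in verifying that the surgery procedure simultaneously (i) produces an $f'$ with sufficient connectivity for Lemma~\ref{main lemma} to apply, and (ii) preserves the bound $\cat\le 2$ at every intermediate stage. Both are addressed by the preliminary results: the spin condition enters Proposition~\ref{surgery} precisely to ensure that the normal bundles of the chosen framing 2-spheres are trivial so that 2-surgery is possible, while $\pi_3(BSO(4))=0$ permits the subsequent 3-surgeries; meanwhile Proposition~\ref{2}(a) applies to each step because $k=2,3$ both satisfy $k\le 4$. Thus, once the spin case is reduced to Lemma~\ref{main lemma} via surgery, the theorem follows entirely by assembling the case analysis above.
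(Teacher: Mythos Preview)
Your proof is correct and follows essentially the same route as the paper: split off $n\le 5$ and $n=6$ via Proposition~\ref{le 5} and Corollary~\ref{=6}, invoke Proposition~\ref{1} when $\cat M\ge 3$, use Theorem~\ref{nonspin} to force $M$ spin when $\cat M=2$ and $n\ge 7$, then apply Proposition~\ref{surgery} followed by Proposition~\ref{2}(a) to feed into Lemma~\ref{main lemma}. Your version is in fact slightly more explicit than the paper's, since you separately dispatch the case $\cat M=1$ and spell out why Proposition~\ref{2}(a) can be iterated (simple connectivity is preserved under $2$- and $3$-surgery in these dimensions), points the paper leaves implicit.
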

\begin{proof}
When $n\le 5$ the statement  is proven in Proposition~\ref{le 5}.
In view of Proposition~\ref{1} it suffice to show that there is no degree one map $f:M\to N$ with $\cat M\le 2$ and $\cat N\ge 3$.
The case of $n=6$ is taken care of in Corollary~\ref{=6}.

We consider the case when $n\ge 7$ and $M$ is spin.  Let $\cat M=2$. In view of Proposition~\ref{surgery} and Proposition~\ref{2} we may assume that $f$ 
induces isomorphisms of homotopy groups of dimension $\le 3$ and $\cat f\le 2$.
Then by Lemma~\ref{main lemma} $\cat N\le 2$.
\end{proof}

\

\begin{rem}
There are non-spin 7-manifolds with the LS-category 2.
We recall that the Wu manifold SU(3)/SO(3) is simply connected and non-spin. A 7-manifold $M$ constructed from the product of a 2-sphere and the Wu manifold $$S^2\times (SU(3)/SO(3))$$ by a 2-surgery on the factor $S^2$ is called a
gyration of the Wu manifold~\cite{Xu}. It is simply connected and non-spin.
Below we compute its LS-category.
\end{rem}
\begin{prop}\label{example}
The LS-category of the gyration of the Wu manifold is 2.
\end{prop}
\begin{proof}
Let $Y=SU(3)/SO(3)$. By Theorem~\ref{WhTh} $\cat Y=2$.
The bordism $X$ defined by the 2-surgery on $Y\times S^2$ is homotopy equivalent to the union $(Y\times S^2)\cup(\{y_0\}\times D^3)$ which is homotopy equivalent to the half-smash product $$Y\rtimes S^2=(Y\times S^2)/(\{y_0\}\times D^3).$$ It is known that for the half-smash product~\cite{SS} $$\cat(A\rtimes B)=\cat A.$$ Hence $\cat X=\cat Y=2$.
Let $M$ be the gyration of the Wu manifold.  Then $X$ is homotopy equivalent to $X'=M\cup D^5$.  The rest of the proof coincides with the end of the proof of Proposition~\ref{2} (b).
Since the inclusion $j:M\to X'$ is a 4-equivalence,
$\Omega(j):\Omega(M)\to\Omega(X')$ is a 3-equivalence.
By Proposition~\ref{p:2} $$\ast^3\Omega(j):\ast^3\Omega(M)\to\ast^3\Omega (X')$$  is a $(2+2+3=7)$-equivalence. This implies that the map $q:G_2(M)\to Z$ to the pull-back
in the diagram
\[
\xymatrix{
G_2(M) \ar[r]^q \ar[rd]^{p_{M}} & Z \ar[d]^{p'} \ar[r]^{j'} & G_2(X') \ar[d]^{p_{X'}}\\
&  M \ar[r]^j &X'}
\]
is a $7$-equivalence. Since $\cat X'=\cat X=2$, there is a section $\sigma:X'\to G_2(X')$ which defines a section $s:M\to Z$ of $p'$. Since the map $q$ is a $7$-equivalence,   $s$ can be lifted with respect to $q$. Thus,  $p_{M}$ admits a section and, hence, $\cat M\le 2$. 

Since  $M$ is not homeomorphic to a sphere, $\cat M>1$ and we obtain that $\cat M= 2$. 
\end{proof}

\

\end{document}